\documentclass[a4paper,12pt]{amsart}
\usepackage[utf8]{inputenc}

\usepackage{url}
\usepackage[margin=1in]{geometry}  % set the margins to 1in on all sides
\usepackage{epsfig}
\usepackage{color}
\usepackage{graphicx}              % to include figures
\usepackage{amsmath}
\usepackage{amscd}               % great math stuff
\usepackage{amsfonts}
\usepackage{amssymb}             % for blackboard bold, etc
\usepackage{amsthm}                % better theorem environments
\usepackage{epsfig}
\usepackage{mathrsfs}

\newtheorem{main_theorem}{Theorem}
\newtheorem{theorem}{Theorem}

\newtheorem*{theorem*}{Theorem}
\newtheorem{lemma}[theorem]{Lemma}
\newtheorem*{lemma*}{Lemma}
\newtheorem{proposition}[theorem]{Proposition}
\newtheorem*{proposition*}{Proposition}
\theoremstyle{definition}
\newtheorem*{definition}{Definition}

\theoremstyle{remark}

\newtheorem*{remark}{Remark}

\newcommand{\good}{\operatorname{good}}
\newcommand{\LCM}{\operatorname{LCM}}

     %mathematical expectation
\newcommand{\Prob}{\mathbf{P}} %probability
\newcommand{\one}{\mathbf{1}}

      % for Real numbers
 
\newcommand{\zed}{\mathbb{Z}}

\newcommand{\bN}{\mathbb{N}}

		 %neighborhood
    %filtration
    %matrix ensemble
\newcommand{\cN}{\mathscr{N}}		 %occupation number
		 %set of partitions
\newcommand{\cM}{\mathscr{M}}

%opening
\title{Solution of the minimum modulus problem for covering systems}
\author{Bob Hough}
\address{Mathematical Institute, University of Oxford, 
Radcliffe Observatory Quarter, Woodstock Road, Oxford, OX2 6GG, UK.}
\email{hough@maths.ox.ac.uk}
\thanks{This research was begun while the author was a postdoctoral
research fellow at Department of Pure Maths and Math Stats, Cambridge, and
completed while a postdoctoral research fellow at the Mathematical Institute,
Oxford.  He is grateful for financial support from ERC Research Grant
279438, Approximate Algebraic Structure and Applications.}

\subjclass[2010]{Primary 11B25, 05B40, 05C70, 11A07}
\keywords{Covering system of congruences, Lov\'{a}sz Local Lemma, Probabilistic
method}

\begin{document}
\begin{abstract}
We answer a question of Erd\H{o}s by showing that the least modulus
of a distinct covering system is at most $ 10^{16}$.
\end{abstract}

\maketitle

\section{Introduction}
In 1934 Romanoff   proved that the numbers of form a prime plus a power
of two have positive lower density.  Writing to Erd\H{o}s, he asked whether
there
exists an arithmetic progression of odd numbers none of whose members is of
this form. Erd\H{o}s's positive answer to this
question introduced the
notion of a
\emph{distinct covering system of congruences}, which is a finite collection of
congruences
\[
 a_i \bmod m_i, \qquad 1 < m_1 < m_2 < ... < m_k
\]
such that every integer satisfies at least one of them.  His  paper
\cite{E50} gives the example
\[
 0\bmod 2,\quad 0 \bmod 3, \quad 1\bmod 4,\quad 3 \bmod 8, \quad 7 \bmod 12,
\quad 23 \bmod 24.
\]

Erd\H{o}s posed a number of problems concerning covering systems, of which two
in particular are well known. From \cite{E50}, the minimum modulus
problem asks whether there exist distinct covering systems for which the least
modulus is arbitrarily large.  With Selfridge, Erd\H{o}s
asked if there exists a distinct covering system with
all moduli odd. These two questions appear  frequently in Erd\H{o}s' collections
of open problems \cite{E57}, \cite{E63}, \cite{E69}, \cite{E73},
\cite{EG80}.  See also \cite{G94}.

Following Erd\H{o}s'  paper, a number of covering systems have been
exhibited
with increasing minimum modulus \cite{C68}, \cite{K71}, \cite{C71}, \cite{M84},
\cite{G09}, 
with the current record of 40 due to Nielsen \cite{N09}. In
\cite{N09}, Nielsen suggests for the first time that the answer to the minimum
modulus problem may be negative.  We confirm this conjecture.

\begin{main_theorem}\label{main_result}
 The least modulus of a distinct covering system is at most $
10^{16}$.
\end{main_theorem}
To obtain the   bound of $10^{16}$ we use some simple
numerical calculations performed in Pari/GP \cite{PariGP}, together with a
standard explicit estimate for the counting function of primes.
For the reader
interested only in the qualitative statement that the minimum modulus has a
uniform upper bound, our presentation is  self-contained.

In the spirit of the odd modulus problem, Theorem \ref{main_result} immediately
implies that any covering
system
contains a modulus divisible by one of an initial segment of primes. We may
return to give a stonger quantitative statement of this type at a
later time.

Prior to our work, the main theoretical progress on the minimum modulus
problem was made recently by Filaseta, Ford, Konyagin, Pomerance and Yu
\cite{FFKPY07}, who showed, among other results, a lower bound for the sum
of the reciprocals
of the moduli of a covering system that grows with the minimum modulus. We build
upon their work. In particular, we use an inductive scheme
in which we filter the moduli of the congruences according to the
size of their prime factors, so that we first consider the
subset of congruences all of whose
prime factors are below an initial threshold, and then increase the threshold
in stages.  The paper \cite{FFKPY07} roughly makes the first stage of this
argument.

A detailed overview of our argument is given in the next section, but we mention
here that our proof follows the probabilistic
method in the sense that we give a positive lower bound for the density of
integers left uncovered by any distinct system of congruences for which the
minimum modulus is sufficiently large. The
Lov\'{a}sz Local Lemma plays a
crucial r\^{o}le.  The suitability of the Local Lemma for
estimating the density of the uncovered set at each stage of the argument relies
upon a certain regularity of the uncovered set from the previous stage, and this
regularity we are able to guarantee by applying the Local Lemma a second time,
in a relative form.

\subsubsection*{Notation} Throughout we denote $\omega(n)$ the number of
distinct prime factors of
natural number $n$.
\subsubsection*{Acknowledgements}
The author is grateful to Ben Green, who read an early version of this
paper and made a number of suggestions that dramatically improved the
structure and readability. The author is also grateful to Pace Nielsen,
Kevin Ford and Michael Filaseta for detailed comments, and to
K. Soundararajan and Persi Diaconis, from whom he learned many of the methods
applied here. An anonymous referee pointed out a numerical improvement to the
parameters
which lowered the final bound.

\section{Overview}
We begin by giving a reasonably detailed overview of the argument.  In this
summary we will consider only 
congruence systems all of whose moduli are square free. Treating the case of
general moduli involves a minor complication, which we address in the next
section.

Let $M>1$ and let
\[
 \mathscr{M} \subset \{m \in \bN: m \text{ square free, } m > M\}
\]
be a finite set of moduli.  We assume that for each
$m
\in \mathscr{M}$  a residue class $a_m \bmod m$ has been given. For $M$
sufficiently large, we argue that for any $\cM$, and for any assignment of the
$a_m$, we can give a positive lower
bound for the density of solutions to the system
of (non)-congruences
\[
R = \{ z \in \zed:\;\;  \forall m \in \mathscr{M},\;  z \not \equiv
a_m \bmod m\}.
\]
The bound will, of course, depend upon $\cM$.

We estimate the density of $R$ in stages, so we introduce a sequence of
thresholds $ 1=P_{-1}< P_0 < P_1 < ...$ with $P_i \to \infty$.  For the purpose
of this
summary we assume that $P_0$ is sufficiently small so that $\prod_{p \leq P_0} p
< M$, although to get a better bound for $M$, we will in practice  choose
$P_0$ to be somewhat larger. 
Let $1 = Q_{-1}$, $Q_0$, $Q_1$, ... be such that
\[
 Q_i = \prod_{p \leq P_i} p,\qquad i \geq 0.
\]
We say that a number $n$ is $P_i$-smooth if $n|Q_i$.  
Let $\cM_0, \cM_1,  ...$ be given by
\[
 \cM_i = \{m\in \cM: m|Q_i\}, \qquad i \geq 0,
\]
that is, $\cM_i$ is the set of $P_i$-smooth moduli in $\cM$. In particular, by
our assumption on $P_0$ we have that $\cM_0$ is empty.   For this reason we set
$R_0 =R_{-1} =  \zed$, and consider the sequence of unsifted sets $R_{0}\supset
R_1
\supset R_2 \supset  ...$
\[
 R_{i} = \bigcap_{m \in \cM_i} \{z \in \zed: z \not \equiv
a_m \bmod m\}, \qquad i \geq 1.
\]
Since the sets $\cM_i$ grow to exhaust $\cM$,   we eventually
have $R = R_i$, and so it will suffice to prove that the density of $R_i$ is
non-zero for each $i$.  This lower bound we will give uniformly for all
 congruence systems with minimum modulus greater than $M$.

We may view $R_i$ as a subset of $\zed/Q_i\zed$.  Thinking of
$\zed/Q_{i+1}\zed$ as fibred over $\zed/Q_i\zed$, we then have that
$R_{i+1}$ is contained in fibres over $R_i$ and we may estimate the density of
$R_{i+1}$ by estimating its density in individual fibres over $R_i$. In fact,
we only consider some `good' fibres over a `well-distributed' subset  of $R_i$. 
Thus  we do not
actually estimate the density of $R_{i+1}$, but rather that of a somewhat
smaller set.  Also, rather than explicitly estimate the density of the smaller
set, we will check that the smaller set is non-empty and then estimate some
statistics related to it. 

Let $i \geq 0$ and let $r \in R_i \bmod Q_i$.  By definition, $r$ has survived
sieving by all of
the congruences to moduli dividing $Q_i$, so that the fraction of the fibre
$(r \bmod Q_i)$ that survives into $R_{i+1}$ is determined by congruence
conditions
to moduli in $\cM_{i+1}\setminus \cM_i$.  Each such modulus $m$ has a unique
factorization as $m = m_0 n$ with $m_0 |Q_i$ and $n$ composed of primes in
the interval $(P_i, P_{i+1}]$.  We call the collection of such $n$ the set of
`new factors'
\[\forall i \geq 0, \qquad \cN_{i+1} = \{n \in \bN: n > 1, n \text{
square free}, p|n \Rightarrow p
\in (P_i, P_{i+1}]\}.\] This set will play a very important r\^{o}le in what
follows.

Given $r \in R_i \bmod Q_i$,   $a_{m_0n} \bmod m_0n$ intersects $(r \bmod Q_i)$
 if and only if $a_{m_0 n} \equiv r \bmod m_0$.  If this
condition is met, the effect within the fibre is determined only by $a_{m_0 n}
\bmod n$.  For this reason, we group together the congruence conditions
according to common $r$ and $n$: for each $r \in \zed/Q_i\zed$ and each $n
\in \cN_{i+1}$ we set
\[
 A_{ n, r} = (r \bmod 
Q_i) \cap \bigcup_{m_0 | Q_i, m_0 n \in \cM }
(a_{m_0n} \bmod m_0n).
\]
We then have 
\[
\forall i \geq 0, \qquad (r \bmod Q_i) \cap  R_{i+1}   = (r \bmod Q_i) \cap
\bigcap_{n\in
\cN_{i+1}} A_{ n, r}^c,
\]
with the interpretation that $R_{i+1}$ within $(r \bmod Q_i)$ results from
sieving $(r \bmod Q_i)$ by sets of residues to moduli in $\cN_{i+1}$.

When $n_1, n_2 \in \cN_{i+1}$ are coprime, sieving by the sets $A_{n_1, r}$ and
$A_{n_2,
r}$ are independent events, by the Chinese Remainder Theorem.  If
all of
the sets $\{A_{n,r}\}_{n \in \cN_{i+1}}$ were jointly independent, then the
density of the fibre $r \bmod Q_{i}$ surviving into $R_{i+1}$  would
be
\begin{equation*}
 \prod_{n \in \cN_{i+1}} \left(1 - \frac{|A_{n,r}\bmod nQ_i|}{n}\right) \doteq
\exp\left(-\sum_{n \in \cN_{i+1}} \frac{|A_{n,r}\bmod nQ_i|}{n}\right).
\end{equation*}
For a given $n$ we can bound the average size of $|A_{n,r}\bmod nQ_i|$ averaged
over
$r \bmod Q_i$:
\begin{align*}
 \frac{1}{Q_i} \sum_{r \bmod Q_i} |A_{n,r}\bmod nQ_i| &\leq \frac{1}{Q_i}
\sum_{r \bmod
Q_i}\sum_{m_0|Q_i} \one\{a_{m_0n} \equiv r \bmod m_0\}\\
& = \frac{1}{Q_i}\sum_{m_0|Q_i} \sum_{r \bmod Q_i} \one\{r \equiv a_{m_0n}
\bmod m_0\}\\& = \frac{1}{Q_i} \sum_{m_0|Q_i} \frac{Q_i}{m_0} =
\prod_{p|Q_i}\left(1 + \frac{1}{p}\right) = (\log P_i)^{1+o(1)}.
\end{align*}
With the belief that the typical set $A_{n,r}$ has size $\approx \log
P_i$, then since 
\[
 \sum_{n \in \cN_{i+1}}\frac{1}{n} = -1 + \prod_{P_i < p \leq P_{i+1}} \left(1
+ \frac{1}{p}\right) \approx \frac{\log P_{i+1}}{\log P_i}
\]
we might hope that the typical fibre above $R_i$ has density $P_{i+1}^{-O(1)}$.
Thus far our
reasoning in the case $i = 0$ roughly follows the treatment of \cite{FFKPY07},
but now we diverge. 

One difficulty with this heuristic account 
is that for generic $n_1, n_2 \in \cN_{i+1}$ it is not generally true that
$(n_1, n_2) = 1$, so that the congruences in $A_{n_1, r}$ and $A_{n_2, r}$ are
not independent. 
To clarify the situation, we may imagine the numbers in the set $\cN_{i+1}$
as being split into two types.  Within the collection of numbers that are
composed
of `few' prime factors, it is generally true that most pairs of numbers in the
set are co-prime.  Meanwhile, the numbers composed of many prime factors are
large and sparse, and thus may be expected to not  contribute
significantly to the sieve.  This reasoning  makes it plausible that the
Lov\'{a}sz
Local Lemma can be used to handle
the mild dependence that results from sieving by the moduli in $\cN_{i+1}$. In
practice, rather than split the moduli into two groups, in applying the Local
Lemma we are naturally  led to make a smoother decomposition, which assigns
to
each modulus a weight according to its number of prime factors.

Unfortunately, it will not generally be true that the
Local Lemma applies to estimate the density of a given fibre, but rather only
that it applies on a certain subset $R_i^* \subset R_i$ of `good' fibres
on which
the distribution of the
sizes $\{|A_{n,r}\bmod nQ_i|\}_{n \in \cN_{i+1}}$ is under control.
Roughly what is needed for a fibre to be good is that a bound in dilations
should hold at each
prime $p \in (P_i, P_{i+1}]$, 
\begin{equation}\label{good_property}
\sum_{n \in \cN_{i+1}, p|n} \frac{
|A_{n,r} \bmod n Q_{i}|}{n}\ll 1.
\end{equation}
Such a bound controls the dependence among the sets $\{A_{n,r}\}_{n \in
\cN_{i+1}}$.  We  give a more precise definition of good fibres in the next
section.

In order to demonstrate that a reasonable number of fibres are good we wish to
understand the distribution of values of $|A_{n,r}\bmod n Q_i|$ for varying $r$
and $n$.   Recall that we gained a heuristic understanding 
of the typical behavior
of $|A_{n,r}\bmod nQ_i|$ by taking the average over $\zed/Q_i\zed$. 
Similarly, we control the distribution of $|A_{n,r} \bmod nQ_i|$ as $r$ varies
in subsets $S_i$ of $R_i$ by bounding the moments
\[
 \frac{1}{|S_i \bmod Q_i|} \sum_{r \in S_i \bmod Q_i} |A_{n,r} \bmod nQ_i|^k,
\qquad k = 1, 2, 3, ....
\]
It transpires that these moments are controlled by statistics 
\[
 \sum_{m|Q_i} \ell_k(m) \max_{b \bmod m} \frac{|S_i \cap (b
\bmod m) \bmod Q_i|}{|S_i \bmod Q_i|}, \qquad k = 1, 2, 3, ....
\]
that measure the
bias in the set $S_i$.  Here $\ell_k(m)$ is a weight, equal to
$(2^k-1)^{\omega(m)}$ in the case that $m$ is square free. When $i = 0$ it will
not be necessary to consider subsets of $R_0 = \zed/Q_0\zed$, since the 
statistics taken over $R_0$ are unbiased, equal to
\begin{equation}\label{unbiased}
 \sum_{m|Q_0} \frac{\ell_k(m)}{m} = \prod_{p<P_0} \left(1 +
\frac{2^{k}-1}{p}\right) \approx (\log P_0)^{2^k-1},
\end{equation}
a rate of growth which will be acceptable for us.  When $i > 0$,
however, the set $R_i$
will typically be  small and irregular as compared to $\zed/Q_i\zed$, so
that our
argument requires searching for good fibres
$R_i^*$ only within a subset $S_i\subset R_i$ chosen to have
statistics that approximate (\ref{unbiased}).

The above discussion suggests that there is a second convenient
notion of a good fibre, which is that $(r \bmod Q_i)$ is `well-distributed' if
for each $n \in \cN_{i+1}$,
\begin{equation}\label{well_distributed_property}
 \max_{b \bmod n} |R_{i+1}\cap (b \bmod n) \cap (r \bmod Q_i) \bmod Q_{i+1}|
\approx \frac{1}{n} |R_{i+1} \cap (r \bmod Q_i) \bmod Q_{i+1}|.
\end{equation}
Thus in a well-distributed fibre $(r \bmod Q_i)$, for each modulus $n \in
\cN_{i+1}$, any residue class modulo $n$ is allowed to hold at most slightly
more than its share of the set $R_{i+1}$.
A pleasant feature of our argument is that a relative form of the Lov\'{a}sz
Local Lemma  guarantees that good fibres in the sense of
(\ref{good_property}) are automatically well-distributed in the sense of
(\ref{well_distributed_property}), so that with respect to the moduli in
$\cN_{i+1}$ composed of large prime factors, a reasonable choice for the set
$S_{i+1}$ is the union of good fibres
from the previous stage, $S_{i+1} = R_{i}^* \cap R_{i+1}$.  

The choice of $S_{i+1} = R_{i}^* \cap R_{i+1}$ ensures that $S_{i+1}$ is
well-distributed
to the moduli in $\cN_{i+1}$ that have only large prime factors, but $R_{i}^*
\cap
R_{i+1} \subset S_i$ may have become poorly distributed as compared to $S_i$
with respect to moduli having smaller
prime factors as a result of variable  sieving in the fibres above
$R_{i}^*$. We balance this effect by reweighting $R_{i}^* \cap R_{i+1}$ with a
measure $\mu_{i+1}$ on $\zed/Q_{i+1}\zed$, with respect to which each fibre
over $R_{i}^*$ has equal weight.  Thus at stage $i+1 \geq 1$ we will in fact
consider
the bias statistics
\[
 \beta_k^k(i+1) = \sum_{m |Q_{i+1}} \ell_k(m) \max_{b \bmod m}\frac{\mu_{i+1}
(R_{i}^* \cap
R_{i+1} \cap (b
\bmod m))}{\mu_{i+1}(R_{i}^* \cap R_{i+1})}.
\]
In general we will be able to show that these statistics approximate the
unbiased statistics (\ref{unbiased}) to within an
error determined only in terms of the quality of well-distribution
(\ref{well_distributed_property}) and
the fractions of fibres that are good from previous
stages.

To summarize, at
stage 0 we do no sieving so that, with a uniform measure, the bias statistics
are under control.  This allows us to say that many fibres over $R_0 =
\zed/Q_0\zed$ are good, and thus, that the bias statistics at stage 1 do not
grow too rapidly. The argument then iterates, with the possibility of continuing
iteration for arbitrarily large values of the parameters $P_i$
depending upon growth of the statistics $\beta(i)$ as compared with 
growth of the $P_i$.  The proof is completed by making this comparison for an
explicit choice of parameters.

\section{The complete argument}
We turn to the technical details of the argument.  As we now treat
congruences to general moduli, we briefly recall some notions from the
previous section,
pointing out the minor variation from the square free case.

As above, $M > 0$ is our upper bound for  the minimum modulus of a covering
system, and \[\cM 
\subset \{m \in \zed, \; m > M\}\] is a finite collection of moduli.  For each
$m \in \cM$ we assume that a congruence class $a_m \bmod m$ is given.  The
uncovered set is
\[
 R = \bigcap_{m \in \cM} (a_m \bmod m)^c,
\]
which we show has a non-zero density.  In the general case it is convenient to
let
\[
Q = \LCM(m: m\in \cM),
\]
so that $R$ is a set defined modulo $Q$.

We take a sequence of thresholds $1 = P_{-1} < P_0 < P_1 < ...$ with $P_0 \geq
2$ and $P_i \to \infty$. Setting $v
= v_p =v_p(Q)$ for the multiplicity with which $p$ divides $Q$ we let
\[
 Q_{-1} = 1, \qquad \forall i \geq 0, \; Q_i = \prod_{p \leq P_i} p^{v}.
\]
Then $\cM_i = \{m \in \cM: m |Q_i\}$ is the collection of $P_i$-smooth
moduli in $\cM$.  The set $R$ is filtered in stages $R_{-1} \supset R_0 \supset
R_1 \supset ...$ by letting $R_{-1} = \zed$, and, for $i \geq 0$, 
\[
 R_i = \bigcap_{m \in \cM_i}  (a_m \bmod m)^c.
\]
Although $Q_i$ now depends in an essential way on the collection of moduli
$\cM$, our argument will, for a given $i$, treat the properties of $R_i$
uniformly for all distinct congruence systems having minimum modulus greater
than $M$.

\subsection{The initial stage}
 We are no longer able to assume that $Q_0 < M$ so that $\cM_0 =
\emptyset$, but we will assume that 
$M$ is sufficiently large so that $\cM_0$ is quite sparse.  Specifically, we
let $0 < \delta < 1$ be a parameter. We may estimate the density of the set 
\[
 R_0 = \bigcap_{m \in \cM_0} (a_m \bmod m)^c
\]
by applying the union bound
\begin{align*}
 |R_0 \bmod Q_0| &\leq Q_0 - \sum_{m  \in \cM_0} |(a_m \bmod m) \bmod Q_0|\\
 & = Q_0 \left(1 - \sum_{m \in \cM_0} \frac{1}{m}\right) \leq Q_0 \left(1 -
\sum_{\substack{m > M \\ p|m \Rightarrow p \leq P_0}}
\frac{1}{m}\right),
\end{align*}
and we make the condition that 
\begin{equation*}\tag{C0}
 \sum_{\substack{m > M\\ p|m \Rightarrow p \leq P_0}}
\frac{1}{m} < \delta.
\end{equation*}
This implies a bound for some bias statistics of $R_0$ as follows. 

Let
 $\ell_k(m)$ be the number of $k$-tuples of natural numbers having $\LCM$
$m$.  This is a multiplicative function, that is $\ell_k(mn) =
\ell_k(m)\ell_k(n)$ when $m$ and $n$ are co-prime, and it is given at prime
powers by
\[
 \ell_k(p^j) = (j+1)^k - j^k.
\] We define the $k$th bias  statistic at stage 0  to be
\begin{align*}
 \beta_k^k(0) &= \sum_{m |Q_0} \ell_k(m) \max_{b \bmod m} \frac{|R_0
\cap (b\bmod m) \bmod Q_0|}{|R_0 \bmod Q_0|}.
\end{align*}
Putting in the trivial bound $|R_0 \cap (b\bmod m) \bmod Q_0| \leq
\frac{Q_0}{m}$, we find
\[
 \beta_k^k(0) \leq 
 \frac{1}{1-\delta} \sum_{m
| Q_0} \frac{\ell_k(m)}{m} \leq \frac{1}{1-\delta} \prod_{p \leq
P_0}\left(\sum_{j=0}^\infty \frac{(j+1)^k - j^k}{p^j}\right).
\]
We now leave the initial stage.  We will return to choose $\delta$ and
$P_0$ at the end of the argument.

\subsection{The inductive loop}
In sieving stage $i+1$,  $i \geq 0$, 
we view $\zed/Q_{i+1}\zed$ as fibred over $\zed/Q_{i}\zed$, and  we consider
the set $R_{i+1}$ within individual fibres over
$R_{i}$. 

Introduce the set of `new moduli'
\[
 \cN_{i+1} = \{n : n |Q_{i+1},\; n > 1,\;  p|n \Rightarrow P_{i}<p \leq
P_{i+1}\},
\]
and notice that each $n \in \cN_{i+1}$ is coprime to $Q_i$.
Thus each modulus $m \in \cM_{i+1} \setminus \cM_{i}$ has a unique
factorization
as $m = m_0 n$ with $m_0|Q_{i}$ and $n \in \cN_{i+1}$.  Given $r \in R_{i}$
and $n \in \cN_{i+1}$ we set
\[
 A_{n,r} = (r \bmod Q_{i}) \cap \bigcup_{m_0 | Q_{i}, m_0 n \in \cM_{i+1}}
(a_{m_0 n} \bmod m_0 n).
\]
Then
\[
 (r \bmod Q_{i}) \cap R_{i+1} = (r \bmod Q_{i}) \cap \bigcap_{n \in \cN_{i+1}}
A_{n,r}^c.
\]

We wish to consider $R_{i+1}$ only in good fibres $(r \bmod Q_i)$ where the
sieve is well-behaved. A set
of properties that we would like good fibres to have is the following.

\begin{definition}
 Let $i \geq 0$ and let $\lambda \geq 0$ be a parameter.  We say that $r \in
\zed/Q_i\zed$ is $\lambda$-\emph{well-distributed} if $R_{i+1} \cap (r \bmod
Q_i)$ is non-empty, and if the fibre satisfies the uniformity property
that for each $n \in \cN_{i+1}$,
\begin{equation}\label{lambda_well_distributed} \max_{b \bmod n} \frac{|R_{i+1}
\cap
(b\bmod n) \cap
(r \bmod Q_i) \bmod Q_{i+1}| }{|R_{i+1}
\cap
(r \bmod Q_i) \bmod Q_{i+1}|} \leq \frac{e^{\lambda \omega(n)}}{n} .
\end{equation}
\end{definition}

An alternative, more technical characterization of good fibres is as follows.

\begin{definition}  Let $i \geq 0$ and let $\lambda \geq 0$ be a real
parameter.  We say that the fibre  $r \in R_i \bmod Q_i$ is
$\lambda$-\emph{good} if, for each $p \in (P_{i}, P_{i+1}]$, 
 \begin{equation}\label{lambda_good}
  \sum_{n \in \cN_{i+1}, p|n} \frac{|A_{n,r} \bmod n Q_{i}| e^{\lambda
\omega(n)}}{n} \leq 1-e^{-\lambda}.
 \end{equation}
\end{definition}
If each fibre in a set $S \subset R_i$ is $\lambda$-good, then we say that the
set $S$ is $\lambda$-good as well, similarly $\lambda$-well-distributed.

A basic observation of our proof  is that a $\lambda$-good fibre is
automatically $\lambda$-well-distributed.  

\begin{proposition}\label{good_fibre_prop}
 Let $i \geq 0$, $\lambda \geq 0$ and let $r \in \zed/Q_i\zed$ be
$\lambda$-good.  Then $r$ is $\lambda$-well-distributed.
\end{proposition}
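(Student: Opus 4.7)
The plan is to work on the fibre $(r \bmod Q_i)$ using a Lovász Local Lemma argument with carefully chosen weights. Since $\gcd(Q_i, Q_{i+1}/Q_i) = 1$, the Chinese Remainder Theorem identifies the fibre with $\zed/N\zed$, where $N = Q_{i+1}/Q_i$, equipped with the uniform measure $\Prob$. For each $n \in \cN_{i+1}$, the set $A_{n,r} \bmod nQ_i$ corresponds under CRT to a subset $B_n \subseteq \zed/n\zed$ of size $|A_{n,r} \bmod nQ_i|$, and I take $E_n$ to be the event $\{z \bmod n \in B_n\}$, so that $\Prob(E_n) = |A_{n,r} \bmod nQ_i|/n$ and $R_{i+1} \cap (r \bmod Q_i)$ corresponds to $\bar F := \bigcap_n \bar E_n$. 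Crucially, if $\gcd(n,n')=1$ then $nn' \mid N$, so $z \bmod n$ and $z \bmod n'$ are independent; thus the dependency graph of the $E_n$ has an edge between $n$ and $n'$ exactly when $\gcd(n,n') > 1$.

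I would apply the asymmetric LLL with weights $x_n := \Prob(E_n)\,e^{\lambda\omega(n)}$. The LLL hypothesis $\Prob(E_n) \leq x_n\prod_{n' \sim n}(1-x_{n'})$ reduces, after taking logarithms, to the inequality
\[
\sum_{n' : \gcd(n',n) > 1} -\log(1-x_{n'}) \leq \lambda\omega(n).
\]
This is the main technical step. Since $-\log(1-x)$ is convex on $[0,1)$ with value $0$ at $x=0$ and value $\lambda$ at $x = 1-e^{-\lambda}$, one obtains $-\log(1-x) \leq \lambda x/(1-e^{-\lambda})$ on $[0, 1-e^{-\lambda}]$. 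The $\lambda$-good hypothesis gives $\sum_{n' : p \mid n'} x_{n'} \leq 1-e^{-\lambda}$ for every prime $p$, and in particular $x_{n'} \leq 1-e^{-\lambda}$, so the convexity bound yields $\sum_{n' : p \mid n'} -\log(1-x_{n'}) \leq \lambda$ for each $p$. Since $\{n' : \gcd(n',n) > 1\} = \bigcup_{p \mid n}\{n' : p \mid n'\}$ and all terms are non-negative, summing over the $\omega(n)$ primes dividing $n$ (with multiplicity arising from the overlap) produces the desired bound. In particular $\Prob(\bar F) \geq \prod_n(1-x_n) > 0$, so $R_{i+1} \cap (r \bmod Q_i)$ is non-empty.

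To deduce $\lambda$-well-distribution, fix $n \in \cN_{i+1}$ and $b \bmod n$, and let $G = \{z \equiv b \bmod n\}$, so $\Prob(G) = 1/n$. Split $\cN_{i+1}$ into the \emph{close} moduli $\{n' : \gcd(n',n) > 1\}$ and the \emph{far} moduli $\{n' : \gcd(n',n) = 1\}$, and let $\bar F_{\mathrm{far}} = \bigcap_{n'\text{ far}} \bar E_{n'}$. By the CRT the event $G$ is independent of $\bar F_{\mathrm{far}}$, so
\[
\Prob(G \cap \bar F) \leq \Prob(G \cap \bar F_{\mathrm{far}}) = \frac{1}{n}\,\Prob(\bar F_{\mathrm{far}}).
\]
The iterated-conditional form of the LLL (obtained by the standard inductive proof that $\Prob(E_i \mid \bigcap_{j \in S}\bar E_j) \leq x_i$ for every $S$ and every $i \notin S$) then gives
\[
\Prob(\bar F \mid \bar F_{\mathrm{far}}) = \Prob\Bigl(\bigcap_{n'\text{ close}}\bar E_{n'}\;\Big|\;\bar F_{\mathrm{far}}\Bigr) \geq \prod_{n'\text{ close}}(1-x_{n'}) \geq e^{-\lambda\omega(n)},
\]
where the final inequality is exactly the estimate proved above. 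Combining these yields $\Prob(G \cap \bar F) \leq (e^{\lambda\omega(n)}/n)\,\Prob(\bar F)$, which after dividing by $\Prob(\bar F) > 0$ is precisely the inequality (\ref{lambda_well_distributed}). The main obstacle throughout is the convexity/union bookkeeping that converts the $\lambda$-good hypothesis into both the unconditional LLL verification and the conditional lower bound $e^{-\lambda\omega(n)}$; everything else is routine CRT bookkeeping.
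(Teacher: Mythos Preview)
Your proof is correct and follows essentially the same approach as the paper: the same weights $x_n = e^{\lambda\omega(n)}|A_{n,r}\bmod nQ_i|/n$, the same convexity bound $-\log(1-x)\le \lambda x/(1-e^{-\lambda})$ on $[0,1-e^{-\lambda}]$ combined with the $\lambda$-good hypothesis to obtain $\prod_{(n',n)>1}(1-x_{n'})\ge e^{-\lambda\omega(n)}$, and the same use of the relative (conditional) form of the Lov\'asz Local Lemma to deduce both nonemptiness and the well-distribution bound. The only differences are cosmetic, namely your explicit CRT identification of the fibre with $\zed/N\zed$ and your phrasing of the relative LLL in conditional-probability form rather than via the inequality~(\ref{local_lemma_strong}).
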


The proof of this fact uses a relative form of the Lov\'{a}sz Local
Lemma.

\begin{lemma*}[Lov\'{a}sz Local Lemma, relative form]
 Let $\{A_u\}_{u \in V}$ be a finite collection of events in a probability
space. Let $D = (V, E)$ be
a directed graph, such that, for each $u \in V$, event $A_u$ is independent of
the sigma-algebra generated by the
events $\{A_v: (u,v) \not \in E\}$.  Suppose that there exist real numbers
$\{x_u\}_{u \in V},$ satisfying $0 \leq x_u < 1$, and for each $u \in V$, 
 \[
 \Prob(A_u) \leq x_u \prod_{(u,v) \in E} (1 - x_v).
 \]
 Then for any $\emptyset \neq U\subset V$
 \begin{equation}\label{local_lemma_strong}
  \Prob\left( \bigcap_{u\in V}A_u^c \right) \geq \Prob\left(
\bigcap_{u\in U} A_u^c\right) \cdot \prod_{v \in V\setminus U}(1 - x_v).
 \end{equation}
 In particular, taking $U$ to be a singleton,
 \begin{equation}\label{local_lemma_weak}
  \Prob\left( \bigcap_{u\in V} A_u^c\right) \geq \prod_{u \in V}
(1-x_u).
 \end{equation}

\end{lemma*}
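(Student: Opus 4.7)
The plan is to reduce the assertion to the standard conditional inequality
\[
\Prob\!\left(A_u \;\Big|\; \bigcap_{v \in S} A_v^c\right) \leq x_u,
\qquad u \in V,\; S \subset V\setminus\{u\},
\]
proved whenever the conditioning event has positive probability, and then to extract the relative statement by the chain rule. Granting the conditional inequality, enumerate $V \setminus U = \{v_1, v_2, \ldots, v_m\}$ in any order and write
\[
\Prob\!\left(\bigcap_{u \in V} A_u^c\right) = \Prob\!\left(\bigcap_{u \in U} A_u^c\right) \cdot \prod_{j=1}^{m} \Prob\!\left(A_{v_j}^c \;\Big|\; \bigcap_{u \in U} A_u^c \cap \bigcap_{i < j} A_{v_i}^c\right).
\]
Each factor on the right is at least $1 - x_{v_j}$, yielding (\ref{local_lemma_strong}); the choice $U = \{u_0\}$ combined with $\Prob(A_{u_0}^c) \geq 1 - x_{u_0}$ gives (\ref{local_lemma_weak}).

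The core task is therefore to prove the conditional inequality by induction on $|S|$. The base case $S = \emptyset$ reads $\Prob(A_u) \leq x_u \prod_{(u,v) \in E}(1-x_v) \leq x_u$, which is the hypothesis. For the inductive step, partition the conditioning set as $S = S_1 \sqcup S_2$, where $S_1 = \{v \in S : (u,v) \in E\}$ contains the out-neighbors of $u$ in $D$ and $S_2 = S \setminus S_1$ is independent of $A_u$. Then
\[
\Prob\!\left(A_u \;\Big|\; \bigcap_{v \in S} A_v^c\right) = \frac{\Prob\!\left(A_u \cap \bigcap_{v \in S_1} A_v^c \;\Big|\; \bigcap_{v \in S_2} A_v^c\right)}{\Prob\!\left(\bigcap_{v \in S_1} A_v^c \;\Big|\; \bigcap_{v \in S_2} A_v^c\right)}.
\]
The numerator is bounded above by $\Prob(A_u \mid \bigcap_{v \in S_2} A_v^c) = \Prob(A_u) \leq x_u \prod_{(u,v) \in E}(1-x_v)$, using independence of $A_u$ from the sigma-algebra generated by $\{A_v : v \in S_2\}$. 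For the denominator, order $S_1 = \{w_1, \ldots, w_t\}$ and apply the chain rule again; each factor $\Prob(A_{w_j}^c \mid \bigcap_{v \in S_2} A_v^c \cap \bigcap_{i < j} A_{w_i}^c)$ is controlled from below by $1 - x_{w_j}$ via the inductive hypothesis, since the conditioning set has size strictly less than $|S|$. Multiplying gives a lower bound of $\prod_{j=1}^{t}(1-x_{w_j}) \geq \prod_{(u,v) \in E}(1-x_v)$, since the out-neighbors appearing in $S_1$ form a subset of all out-neighbors and the missing factors are at most $1$. Dividing yields the desired bound $x_u$.

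The main subtlety lies in setting up the induction cleanly: one must verify that the conditioning events appearing in the chain-rule expansion of the denominator have positive probability (so the conditional probabilities are defined) and that the inductive hypothesis genuinely applies, i.e.\ that the conditioning set used at each stage is strictly smaller than $S$. Both facts follow from the inductive hypothesis itself, which gives $\Prob(A_{w_j}^c \mid \cdots) \geq 1 - x_{w_j} > 0$, so the argument is self-consistent. Once this induction is established, the relative form follows immediately from the single chain-rule identity displayed above, with no further probabilistic input required.
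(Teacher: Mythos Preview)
Your proof is correct and follows essentially the same route as the paper's: both establish the conditional inequality $\Prob(A_u \mid \bigcap_{v\in S} A_v^c) \leq x_u$ by induction on $|S|$, splitting $S$ into out-neighbors $S_1$ and non-neighbors $S_2$, bounding the numerator via independence and the denominator via the chain rule plus induction, and then reading off the relative form from a single chain-rule expansion over $V\setminus U$. The only cosmetic difference is that the paper packages the induction as a simultaneous induction on two statements (the conditional bound and the positivity/lower bound for $\Prob(\bigcap_{j\in S} A_j^c)$), whereas you fold the latter into the former; the content is identical.
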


\begin{remark}
 The conclusion (\ref{local_lemma_weak}) is the standard one, see \cite{ASE92}. 
The stronger conclusion (\ref{local_lemma_strong}) follows directly from the
proof.  For completeness, we show the argument in Appendix
\ref{LLL_appendix}, see also \cite{TV06}.
\end{remark}

The application of the Local Lemma to prove Proposition \ref{good_fibre_prop}
is as follows.  Write $F_r$
for the fibre $(r \bmod Q_{i}) \subset
\zed/Q_{i+1}\zed$ and make it a probability
space with the uniform measure $\Prob_r$.  The events are the collection
$\{A_{n,r}\}_{n
\in \cN_{i+1}}$.  Since $F_r$ contains $\frac{Q_{i+1}}{Q_{i}}$ elements, and
since $A_{n,r}$ is a set defined modulo $nQ_i$,
\[\Prob_r(A_{n,r}) = \frac{|A_{n,r} \bmod nQ_{i}|}{n}.\]  By first translating
by $-r$ and then dilating by $\frac{1}{Q_{i}}$ we map $F_r$ onto
$\zed/\frac{Q_{i+1}}{Q_{i}} \zed$.  For $n \in
\cN_{i+1}$, this map gives a bijection between progressions modulo $n Q_{i}$
constrained to $(r \bmod Q_{i})$, and unconstrained progressions modulo $n$ in
$\zed/\frac{Q_{i+1}}{Q_i}\zed$.
Applying this map, and then the Chinese Remainder Theorem, makes it clear that
$A_{n,r}$ is jointly independent of the $\sigma$-algebra generated by the events
 \[\{(b \bmod n') \cap (r \bmod Q_{i}): n' \in \cN_{i+1}, (n,n')=1\}.\]
In particular, a valid dependency graph with which to apply the Local Lemma has
edges between $n_1, n_2 \in \cN_{i+1}$ if and only if $n_1 \neq n_2$ and  $(n_1,
n_2) > 1$.

\begin{proof}[Proof of Proposition \ref{good_fibre_prop}]
 We first check that
 \[
  \forall n \in \cN_{i+1}, \qquad  x_n = e^{\lambda \omega(n)} \frac{|A_{n,r}
\bmod n Q_{i}|}{n}
 \]
is an admissible set of weights with which to apply the Local Lemma.

Since the fibre $r$ is $\lambda$-good, the bound in dilations
 condition (\ref{lambda_good}) gives 
 that for all $p \in
(P_{i}, P_{i+1}]$,
\[
 \sum_{n \in \cN_{i+1}: p|n} \frac{|A_{n,r} \bmod n Q_{i}| e^{\lambda
\omega(n)}}{n} \leq 1 - e^{-\lambda}.
\]
Dropping all but one term
in the sum, we see that for each $n \in \cN_{i+1}$, $1-x_n \geq e^{-\lambda}$. 
Thus, by convexity,
\[
1-x_n \geq \exp\left(\frac{-\lambda}{1-e^{-\lambda}} x_n\right).
 \]
 Therefore, for a given $n \in \cN_{i+1}$,
 \begin{align*}
  \prod_{n' \in \cN_{i+1}: (n,n')>1} (1-x_{n'}) &\geq \prod_{p|n} \prod_{n' \in
\cN_{i+1}: p|n'} (1-x_{n'})\\ & \geq \exp\left(\frac{-\lambda}{1-e^{-\lambda}}
\sum_{p|n} \sum_{n' \in \cN_{i+1}: p|n'} \frac{e^{\lambda \omega(n')} |A_{n',r}
\bmod n'Q_{i}|}{n'}\right)\\& \geq \exp\left(-\lambda \omega(n)\right).
 \end{align*}
 It follows
that
\[ 
 x_n \prod_{\substack{n' \in \cN_{i+1}: (n,n')>1\\ n' \neq n}} (1-x_{n'})\geq
x_n
\prod_{n' \in \cN_{i+1}: (n,n')>1} (1-x_{n'}) \geq \frac{|A_{n,r} \bmod
nQ_{i}|}{n}
\]
so that the Lov\'{a}sz criterion is satisfied.  It is then immediate that the
fibre itself is non-empty, since the product in the conclusion
(\ref{local_lemma_weak}) of the Local Lemma is non-zero.

For the uniformity property (\ref{lambda_well_distributed}),   let $n \in
\cN_{i+1}$ and let $b
\bmod n$ maximize
\[
 \frac{|R_{i+1} \cap (r \bmod Q_{i}) \cap (b\bmod n) \bmod Q_{i+1}|}{|R_{i+1}
\cap (r \bmod
Q_{i}) \bmod Q_{i+1}|} = \frac{\Prob_r \left( \left(\bigcap_{n' \in \cN_{i+1}}
A_{n',r}^c\right) \cap (b \bmod n)\right)}{\Prob_r \left( \bigcap_{n' \in
\cN_{i+1}} A_{n',r}^c \right)}.
\]
Dropping part of the intersection, the numerator is bounded above by 
\[
 \Prob_r \left( \left(\bigcap_{n' \in \cN_{i+1}, (n',n) = 1}
A_{n',r}^c\right) \cap (b \bmod n)\right) = \frac{1}{n} \Prob_r 
\left(\bigcap_{n' \in \cN_{i+1}, (n',n) = 1}
A_{n',r}^c\right).
\]
Now by the stronger conclusion (\ref{local_lemma_strong}) of the 
Local Lemma,
\[
 \Prob_r \left( \bigcap_{n' \in
\cN_{i+1}} A_{n',r}^c \right) \geq \Prob_r \left(\bigcap_{n' \in \cN_{i+1},
(n',n) = 1}
A_{n',r}^c\right) \prod_{n' \in \cN_{i+1}, (n',n)>1} (1 - x_{n'}).
\]
Since we checked above that 
\[
 \prod_{n' \in \cN_{i+1}, (n',n)>1} (1 - x_{n'}) \geq e^{-\lambda \omega(n)}
\]
it follows that
\[
 \frac{|R_{i+1} \cap (b\bmod n)\cap (r \bmod Q_{i}) \bmod Q_{i+1}|}{|R_{i+1}
\cap (r \bmod
Q_{i}) \bmod Q_{i+1}|} \leq \frac{1}{n} \prod_{n' \in \cN_{i+1},
(n',n)>1}(1-x_{n'})^{-1} \leq \frac{e^{\lambda \omega(n)}}{n},
\]
which is the condition of uniformity.

\end{proof}

Let $R_{-1}^* = \zed$, and for $i \geq 0$  let $R_i^*$ be the
$\lambda$-good fibres within $R_{i-1}^* \cap R_i$.   It remains to describe how
we may find good fibres above a large well-distributed set.

It will be convenient to reweight $\zed/Q_i\zed$ at each stage with a measure
$\mu_i$, supported on the set $R_{i-1}^* \cap R_i$.  The advantage of
using this measure is that it will balance the effect of the variation in
size of the various good fibres from previous stages, so that at stage $i+1$ we
 isolate the effects of sieving by moduli in $\cN_{i+1}$.  We define $\mu_i$
iteratively by setting 
\[\mu_{0}(r) = \left\{ \begin{array}{lll} \frac{1}{|R_0\bmod Q_0|}&&
 r \in R_0 \bmod Q_0\\ 0 && r \not \in R_0 \bmod Q_0\end{array}\right. .
 \]
 For $i
\geq 0$ and  for $r \in R_{i}^* \cap R_{i+1}\bmod Q_{i+1}$ we
reduce $r \bmod Q_i$ to determine $\mu_i(r)$, and set 
\[
\mu_{i+1}(r)=\left\{\begin{array}{lll} 
\frac{\mu_{i}(r \bmod Q_i )}{|R_{i+1} \cap (r \bmod Q_{i}) \bmod Q_{i+1}|} && r
\in R_i^* \cap R_{i+1} \bmod Q_{i+1}\\ 0 && r \not \in R_i^* \cap
R_{i+1} \bmod Q_{i+1}\end{array}\right. .
\]

Along with the measures $\mu_i$, we track a collection of bias statistics.
\begin{definition}
 Let $i \geq 0$ and $k \geq 1$.  The $k$th \emph{bias  statistic} of set
$R_{i-1}^* \cap R_i \subset \zed/Q_i\zed$ is defined by
\[
 \beta_k^k(i) =  \sum_{m|Q_i} \ell_k(m)
\max_{b \bmod m}\frac{\mu_i(R_{i-1}^* \cap R_i
\cap (b\bmod m))}{\mu_i(R_{i-1}^* \cap R_i)}.
\]
\end{definition}
Since we require $R_{-1}^* = \zed$ and since $\mu_0$ is uniform on $R_0$,
this agrees with our definition of the
bias statistics for $R_0$ given in the initial stage. These bias statistics
will be the main tool used to produce good fibres, a discussion which we
briefly postpone.  

The primary virtue of the measure $\mu_i$ is that it allows us to bound the
iterative growth of the bias statistics only in terms of the size of the
well-distributed set $R_i^*$ and its parameter of well-distribution, $\lambda$. 
Before demonstrating this, we record the notation
\[
 \pi_i^{\good} = \frac{\mu_i(R_i^*)}{\mu_i(R_{i-1}^* \cap R_i)}
\]
for the  fraction of good fibres in $R_{i-1}^* \cap R_i$, and we
record the
following simple lemma.
\begin{lemma}
Let $i \geq 0$.  For a fixed $r \in R_i^* \bmod Q_i$, the measure $\mu_{i+1}$
is constant on $R_{i+1} \cap (r \bmod Q_i)$.  The total mass of $\mu_{i+1}$
is given by
\[
 \mu_{i+1}(R_{i}^* \cap R_{i+1}) = \pi_i^{\good} \mu_i( R_{i-1}^* \cap R_{i}).
\]
\end{lemma}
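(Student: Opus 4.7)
The plan is to unwind the definition of $\mu_{i+1}$ carefully, and then rearrange a double sum by grouping elements according to the fibre of $R_i^*$ in which they lie.

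For the first claim, fix $r_0 \in R_i^* \bmod Q_i$ and take any $r \in R_{i+1} \cap (r_0 \bmod Q_i) \bmod Q_{i+1}$. Then by construction $r \equiv r_0 \bmod Q_i$, so the value $\mu_i(r)$ used in the definition of $\mu_{i+1}(r)$ is just $\mu_i(r_0)$, which depends only on $r_0$. Moreover the denominator $|R_{i+1} \cap (r \bmod Q_i) \bmod Q_{i+1}|$ equals $|R_{i+1} \cap (r_0 \bmod Q_i) \bmod Q_{i+1}|$, again depending only on $r_0$. Thus $\mu_{i+1}$ takes the constant value
\[
\mu_{i+1}(r) = \frac{\mu_i(r_0)}{|R_{i+1} \cap (r_0 \bmod Q_i) \bmod Q_{i+1}|}
\]
throughout the fibre. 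Since $r_0 \in R_i^*$ is $\lambda$-good, Proposition~\ref{good_fibre_prop} tells us that the fibre is non-empty, so the denominator is strictly positive and the formula makes sense.

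For the second claim, partition $R_i^* \cap R_{i+1} \bmod Q_{i+1}$ according to its reduction modulo $Q_i$: every such $r$ satisfies $r \bmod Q_i \in R_i^*$, and conversely each fibre $R_{i+1} \cap (r_0 \bmod Q_i)$ for $r_0 \in R_i^* \bmod Q_i$ contributes exactly to this set. Using the first claim, the mass of each fibre is
\[
\sum_{r \in R_{i+1} \cap (r_0 \bmod Q_i) \bmod Q_{i+1}} \mu_{i+1}(r) \;=\; |R_{i+1} \cap (r_0 \bmod Q_i) \bmod Q_{i+1}| \cdot \frac{\mu_i(r_0)}{|R_{i+1} \cap (r_0 \bmod Q_i) \bmod Q_{i+1}|} \;=\; \mu_i(r_0).
\]
Summing over $r_0 \in R_i^* \bmod Q_i$ gives $\mu_{i+1}(R_i^* \cap R_{i+1}) = \mu_i(R_i^*)$. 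By the definition $\pi_i^{\good} = \mu_i(R_i^*)/\mu_i(R_{i-1}^* \cap R_i)$, this equals $\pi_i^{\good} \mu_i(R_{i-1}^* \cap R_i)$, as required.

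There is no substantive obstacle here; the only point worth checking is that the construction is well-defined (no division by zero), which is exactly the non-emptiness provided by Proposition~\ref{good_fibre_prop}. The lemma is essentially the observation that the reweighting by $\mu_{i+1}$ is designed precisely so that each good fibre from stage $i$ carries mass $\mu_i(r_0)$ at stage $i+1$, independent of how many surviving points it contains, which is what makes the induction on bias statistics $\beta_k(i)$ tractable.
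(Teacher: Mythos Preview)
Your proof is correct and follows essentially the same approach as the paper: unwind the definition to see constancy on each fibre, then fibre the sum over $R_i^* \bmod Q_i$ so that each fibre contributes exactly $\mu_i(r_0)$, and finish by invoking the definition of $\pi_i^{\good}$. You are slightly more explicit than the paper about the first claim and about non-emptiness of the fibre, but the argument is the same.
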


\begin{proof}
The first observation is immediate from the definition.

The total mass is given by
\begin{align*}
 \mu_{i+1}(R_{i}^* &\cap R_{i+1})= \sum_{r \in R_{i}^* \cap R_{i+1} \bmod
Q_{i+1}} \mu_{i+1}(r)
\\&= \sum_{r_0 \in R_{i}^* \bmod Q_{i}} \mu_{i}(r_0) \sum_{r \in R_{i+1} \cap
(r_0 \bmod Q_{i})\bmod Q_{i+1}} \frac{1}{|R_{i+1} \cap (r_0 \bmod Q_{i}) \bmod
Q_{i+1}|}\\& = \sum_{r_0 \in R_{i}^*\bmod Q_i} \mu_{i}(r_0)\\& =
\pi_i^{\good}\mu_{i}(R_{i-1}^* \cap R_{i}).
\end{align*}
\end{proof}

The main proposition regarding the measures $\mu_i$ now is as follows.

\begin{proposition}
 Let $i \geq 0$ and $k \geq 1$ and suppose that $R_{i}^*$ is $
\lambda$-good.  We have
\[
 \beta_{k}^k(i+1) \leq  \frac{\beta_k^k(i)}{\pi_i^{\good}} \prod_{P_{i} <
p
\leq P_{i+1}}
\left(1 + e^\lambda \sum_{j=1}^{v_p} \frac{(j+1)^k - j^k}{p^j}
\right).
\]

\end{proposition}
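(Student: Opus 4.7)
The plan is to expand the definition of $\beta_k^k(i+1)$, split each modulus $m|Q_{i+1}$ uniquely as $m = m_0 n$ with $m_0|Q_i$ and $n$ composed only of primes in $(P_i, P_{i+1}]$, and exploit the multiplicativity $\ell_k(m) = \ell_k(m_0)\ell_k(n)$. The crucial input is Proposition~\ref{good_fibre_prop}: since every $r_0 \in R_i^*$ is $\lambda$-good, it is $\lambda$-well-distributed, which gives
\[
\max_{b_1 \bmod n}\frac{|R_{i+1}\cap(b_1\bmod n)\cap(r_0\bmod Q_i)\bmod Q_{i+1}|}{|R_{i+1}\cap(r_0\bmod Q_i)\bmod Q_{i+1}|} \leq \frac{e^{\lambda\omega(n)}}{n}
\]
for every such $n$ (trivially including the case $n=1$).

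For fixed $m = m_0 n$ and $b\bmod m$ decomposed by CRT as $(b_0\bmod m_0, b_1\bmod n)$, I would group the sum defining $\mu_{i+1}(R_i^*\cap R_{i+1}\cap (b\bmod m))$ according to the reduction $r_0 \in R_i^*\cap(b_0\bmod m_0)\bmod Q_i$ of $r$ modulo $Q_i$. Since
\[
\mu_{i+1}(r) = \frac{\mu_i(r_0)}{|R_{i+1}\cap(r_0\bmod Q_i)\bmod Q_{i+1}|}
\]
and the well-distribution inequality controls the ratio of the inner fibre count by $e^{\lambda\omega(n)}/n$, this collapses to
\[
\mu_{i+1}(R_i^*\cap R_{i+1}\cap(b\bmod m)) \leq \frac{e^{\lambda\omega(n)}}{n}\,\mu_i(R_i^*\cap(b_0\bmod m_0)).
\]
The right-hand side is independent of $b_1$, so maximizing over $b\bmod m$ reduces to maximizing over $b_0\bmod m_0$.

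Summing against $\ell_k(m)=\ell_k(m_0)\ell_k(n)$, the sum over $n\,|\,Q_{i+1}/Q_i$ factors into the desired Euler product
\[
\sum_{n}\frac{\ell_k(n)e^{\lambda\omega(n)}}{n} = \prod_{P_i<p\leq P_{i+1}}\left(1 + e^\lambda\sum_{j=1}^{v_p}\frac{(j+1)^k-j^k}{p^j}\right),
\]
using that $\omega(p^j)=1$ for $j\geq 1$. The remaining $m_0$-sum is at most $\beta_k^k(i)\cdot\mu_i(R_{i-1}^*\cap R_i)$, by the definition of $\beta_k^k(i)$ together with the inclusion $R_i^*\subset R_{i-1}^*\cap R_i$. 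Dividing by $\mu_{i+1}(R_i^*\cap R_{i+1}) = \pi_i^{\good}\mu_i(R_{i-1}^*\cap R_i)$, as supplied by the lemma just above, produces the claimed factor $1/\pi_i^{\good}$.

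The main obstacle is purely bookkeeping: one must confirm that Proposition~\ref{good_fibre_prop} is applied fibre-by-fibre inside the $r_0$-grouping (not to the whole set $R_i^*\cap R_{i+1}$ at once), and that the resulting estimate decouples cleanly into a $b_0$-dependent piece and an $n$-dependent piece, so that the $b$-maximum separates correctly across the CRT factorization. Once this separation is in place, the Euler-product step and the reduction to $\beta_k^k(i)$ are immediate from multiplicativity and the definitions.
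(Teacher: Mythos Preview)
Your proposal is correct and follows essentially the same route as the paper: factor $m=m_0n$, fibre over $\zed/Q_i\zed$, apply the $\lambda$-well-distributed bound (via Proposition~\ref{good_fibre_prop}) fibre-by-fibre to separate off the factor $e^{\lambda\omega(n)}/n$, then use multiplicativity of $\ell_k$ to split the sum into the Euler product over $n$ and the $m_0$-sum, which is bounded by $\beta_k^k(i)/\pi_i^{\good}$ using $R_i^*\subset R_{i-1}^*\cap R_i$ and the mass identity $\mu_{i+1}(R_i^*\cap R_{i+1})=\pi_i^{\good}\mu_i(R_{i-1}^*\cap R_i)$.
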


\begin{proof}
Recall,
 \begin{equation}\label{bias_stats_def}
  \beta_k^k(i+1) = \sum_{m | Q_{i+1}} \ell_k(m) \max_{b \bmod m}
\frac{\mu_{i+1}(R_{i}^* \cap R_{i+1} \cap (b \bmod m))}{\mu_{i+1}(R_{i}^* \cap
R_{i+1})}.
 \end{equation}
Given $m |Q_{i+1}$ factor $m = m_0 n$ with $m_0 | Q_{i}$ and $n \in
\{1\}\cup\cN_{i+1}$. Let $b \bmod m$ maximize $\mu_{i+1}(R_{i}^* \cap R_{i+1}
\cap (b
\bmod m))$.  Fibring over $\zed/Q_{i}\zed$,  we have
\begin{align*}
 \mu_{i+1}(R_{i}^* \cap R_{i+1} &\cap (b \bmod m) )= \sum_{\substack{r_0
\in
R_{i}^* \bmod Q_{i}\\ r_0 \equiv b \bmod m_0}}\mu_{i+1}( (r_0
\bmod Q_{i}) \cap (b \bmod n))   \\ & = \sum_{\substack{r_0
\in
R_{i}^* \bmod Q_{i}\\ r_0 \equiv b \bmod m_0}} \mu_{i}(r_0) \frac{|R_{i+1}
\cap (b \bmod n) \cap (r_0 \bmod Q_{i}) \bmod Q_{i+1}|}{|R_{i+1} \cap (r_0 \bmod
Q_{i}) \bmod Q_{i+1}|}.
\end{align*}
Since the good set $R_{i}^*$ is $\lambda$-well-distributed, the last sum is
bounded by 
\begin{align*} \frac{e^{\lambda \omega(n)}}{n}
\sum_{\substack{r_0 \in
R_{i}^* \bmod Q_{i}\\ r_0 \equiv b \bmod m_0}} \mu_{i}(r_0) .
\end{align*}
Therefore, using the multiplicativity of $\ell_k(m)$, we find
\begin{align*}
 \beta_k^k(i+1) &\leq \sum_{n \in \{1\} \cup \cN_{i+1}} \frac{\ell_k(n)
e^{\lambda \omega(n)}}{n} \sum_{m_0 |Q_{i}} \ell_k(m_0)\max_{b \bmod m_0} 
\frac{\mu_{i}( R_{i}^* \cap (b \bmod m_0))}{\mu_{i+1}(R_{i}^* \cap R_{i+1})}.
\end{align*}
Since $\{1\} \cup \cN_{i+1}$ has the structure of a direct product, the sum over
$n$ factors as the product of the proposition.  Meanwhile, using $R_{i}^*
\subset R_{i-1}^* \cap R_{i}$ and $\mu_{i+1}(R_{i}^* \cap R_{i+1}) =
\pi_i^{\good} \mu_i(R_{i-1}^* \cap R_{i})$, we  bound the sum over $m_0$ by
\begin{align*}
&\sum_{m_0 |Q_{i}} \ell_k(m_0)\max_{b \bmod m_0} 
\frac{\mu_{i}( R_{i}^* \cap (b \bmod m_0))}{\mu_{i+1}(R_{i}^* \cap R_{i+1})}
\\&\leq \frac{1}{\pi_i^{\good}} \sum_{m_0 |Q_{i}} \ell_k(m_0)\max_{b \bmod
m_0} 
\frac{\mu_{i}( R_{i-1}^*\cap R_{i} \cap (b \bmod m_0))}{\mu_{i}(R_{i-1}^*
\cap R_{i})} = \frac{\beta_k^k(i)}{\pi_i^{\good}}.
\end{align*}

\end{proof}

It  remains to demonstrate the utility of the bias statistics for
generating good fibres. 
For $n \in \cN_{i+1}$, $k \geq 1$ and $R_{i-1}^* \cap R_i$ defined modulo
$Q_i$, define the $k$th moment of $|A_{n,r} \bmod nQ_i|$ to be 
\[
 M_k^k(i,n) = \frac{1}{\mu_i(R_{i-1}^* \cap R_{i})} \sum_{r \in
R_{i-1}^* \cap R_{i} \bmod Q_i}\mu_i(r) |A_{n,r} \bmod n Q_{i}|^k.
\]
The bias statistics control these moments.
\begin{lemma}\label{moment_lemma}
Let $i \geq 0$ and let $n \in \cN_{i+1}$.  
We have $ M_k(i,n) \leq \beta_k(i).$
\end{lemma}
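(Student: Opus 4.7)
The plan is to bound $|A_{n,r} \bmod nQ_i|^k$ pointwise in $r$ by a sum of indicator functions of congruence classes modulo divisors of $Q_i$, then integrate against $\mu_i$ and compare the result to $\beta_k^k(i)$.

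First I would establish the basic inequality
\[
|A_{n,r} \bmod nQ_i| \leq \sum_{\substack{m_0 | Q_i \\ m_0 n \in \cM_{i+1}}} \one\{a_{m_0 n} \equiv r \bmod m_0\}.
\]
This holds because, for each $m_0|Q_i$, the set $(r \bmod Q_i) \cap (a_{m_0 n} \bmod m_0 n)$ viewed in $\zed/nQ_i\zed$ is empty unless $a_{m_0 n} \equiv r \bmod m_0$ (as $\gcd(Q_i, m_0 n) = m_0$), and otherwise is a single residue class modulo $\LCM(Q_i, m_0 n) = nQ_i$.

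Next I would raise this bound to the $k$th power, expanding into a $k$-fold sum over tuples $(m_0^{(1)}, \ldots, m_0^{(k)})$ of divisors of $Q_i$. For a fixed such tuple with $m = \LCM(m_0^{(1)}, \ldots, m_0^{(k)})$, the simultaneous conditions $a_{m_0^{(j)} n} \equiv r \bmod m_0^{(j)}$ for all $j$ either cut out a single residue class $b = b(n; m_0^{(1)}, \ldots, m_0^{(k)}) \bmod m$ or are inconsistent; in both cases we can upper bound the product of indicators by $\one\{r \equiv b \bmod m\}$ for some $b$ depending only on $n$ and the tuple. Grouping tuples by their lcm $m$ then gives
\[
|A_{n,r} \bmod nQ_i|^k \leq \sum_{m | Q_i} \sum_{\substack{(m_0^{(j)}): \LCM = m}} \one\{r \equiv b \bmod m\}.
\]

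Finally I would multiply by $\mu_i(r)$ and sum over $r \in R_{i-1}^* \cap R_i \bmod Q_i$. Each inner indicator sum is bounded by $\mu_i(R_{i-1}^* \cap R_i \cap (b \bmod m)) \leq \max_{b \bmod m} \mu_i(R_{i-1}^* \cap R_i \cap (b \bmod m))$, and the number of $k$-tuples of divisors of $Q_i$ with lcm equal to $m$ is exactly $\ell_k(m)$ (this is immediate from multiplicativity and the formula $\ell_k(p^j) = (j+1)^k - j^k$, since a $k$-tuple $(j_1, \ldots, j_k)$ of exponents has $\max = j$ in exactly $(j+1)^k - j^k$ ways). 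Dividing through by $\mu_i(R_{i-1}^* \cap R_i)$ produces precisely $\beta_k^k(i)$, yielding $M_k^k(i,n) \leq \beta_k^k(i)$ and hence the lemma.

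The only genuinely delicate step is the initial count: one must verify that the intersection $(r \bmod Q_i) \cap (a_{m_0 n} \bmod m_0 n)$ really reduces to at most a single class modulo $nQ_i$ and that the coprimality of $n$ to $Q_i$ forces the consistency condition to be purely on $m_0$. Everything after that is a clean combinatorial expansion and a change of order of summation.
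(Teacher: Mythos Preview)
Your proposal is correct and follows essentially the same argument as the paper: bound $|A_{n,r}\bmod nQ_i|$ by $\sum_{m_0}\one\{r\equiv a_{m_0 n}\bmod m_0\}$ via the Chinese Remainder Theorem, expand the $k$th power as a sum over $k$-tuples, group by the lcm $m$, and bound the resulting $\mu_i$-measure by the maximum over residue classes mod $m$ to recover $\beta_k^k(i)$. The only cosmetic difference is that you are slightly more explicit about the possible inconsistency of the simultaneous congruences and about why $\ell_k$ counts the tuples; the paper absorbs both points into the phrase ``at most one class modulo the LCM.''
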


\begin{proof}
 Recall that \[A_{n,r} = (r \bmod Q_i) \cap \left(\bigcup_{m_0 |Q_i, m_0 n \in
\cM} (a_{m_0 n}
\bmod m_0 n)
\right).\]  A given congruence $(a_{m_0 n} \bmod m_0 n)$
intersects $r \bmod Q_i$ if and only if $r\equiv a_{m_0 n} \bmod m_0$.  If it
does
intersect, it does so in a single residue class modulo $ nQ_i$.  Thus, the union
bound gives
\[
 |A_{n,r} \bmod n Q_i| \leq \sum_{m_0 |Q_i} \one\{r \equiv a_{m_0n} \bmod m_0\}.
\]
It follows that, considering $R_{i-1}^* \cap R_i$ as a subset of $
\zed/Q_i\zed$,
\begin{align*}
 M_k^k(i,n) &\leq \frac{1}{\mu_i(R_{i-1}^* \cap R_i)} \sum_{r \in
R_{i-1}^* \cap R_i }\mu_i(r) \sum_{m_1, ..., m_k |Q_i} \one\{\forall 1 \leq j
\leq k, \; r \equiv a_{m_j n} \bmod m_j\}
\\ & = \frac{1}{\mu_i(R_{i-1}^* \cap R_i)}\sum_{m_1, ..., m_k |Q_i} \sum_{r \in
R_{i-1}^* \cap R_i }\mu_i(r)\one\{\forall 1
\leq j
\leq k, \; r \equiv a_{m_j n} \bmod m_j\} .
\end{align*}
The inner condition restricts $r$ to at most one class modulo the LCM of
$m_1, ..., m_k$.  Grouping $m_1, ..., m_k$ according to their LCM, and
writing $\ell_k(m)$ for the number of ways in which $m$ is the LCM of a
$k$-tuple of natural numbers, we find
\begin{align*}M_k^k(i,n) \leq \frac{1}{\mu_i(R_{i-1}^* \cap R_i )} \sum_{m
|Q_i}
\ell_k(m) \max_{b \bmod m} \mu_i(R_{i-1}^* \cap R_i \cap (b \bmod m))
=
\beta_k^k(i).\end{align*}
\end{proof}

Since the above estimate is uniform in $n$, we have convexity-type control over
mixtures of the sizes
 $\{|A_{n,r} \bmod nQ_i|\}_{n \in \cN_{i+1}}$.
\begin{lemma}\label{convexity_lemma}
 Let $i \geq 0$ and $k \geq 1$.  Let $\{w_n\}_{n \in \cN_{i+1}}$ be a set of
non-negative
weights, not all zero.  Then for all $B> 0$ and any $k \geq 1$
\begin{align*}
 \frac{1}{\mu_i(R_{i-1}^* \cap R_i)}\mu_i\left(r \in R_{i-1}^* \cap
R_i:\sum_{n \in
\cN_{i+1}} w_n |A_{n,r}
\bmod nQ_i| > B \right)\leq \frac{ \beta_k^k(i)}{B^k}\left(\sum_{n \in
\cN_{i+1}}
w_n\right)^k.&
\end{align*}
\end{lemma}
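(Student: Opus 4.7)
The plan is to combine a Markov-type inequality on the $k$th power of the weighted sum with Jensen's inequality (convexity of $x \mapsto x^k$), and then invoke the bound on single moments already proved in Lemma \ref{moment_lemma}.

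Explicitly, I would first write $W = \sum_{n \in \cN_{i+1}} w_n > 0$ and apply Markov's inequality with respect to the measure $\mu_i$ restricted to $R_{i-1}^* \cap R_i$: the $\mu_i$-measure of the set where $\sum_n w_n |A_{n,r} \bmod n Q_i| > B$ is at most $B^{-k}$ times the $k$th moment
\[
\sum_{r \in R_{i-1}^* \cap R_i \bmod Q_i} \mu_i(r) \Bigl( \sum_{n \in \cN_{i+1}} w_n |A_{n,r} \bmod n Q_i| \Bigr)^k.
\]
Next I would apply Jensen's inequality to the convex function $x \mapsto x^k$ with respect to the probability weights $w_n/W$, giving the pointwise bound
\[
\Bigl(\sum_n w_n |A_{n,r} \bmod nQ_i|\Bigr)^k = W^k \Bigl(\sum_n \tfrac{w_n}{W}|A_{n,r} \bmod nQ_i|\Bigr)^k \leq W^{k-1} \sum_n w_n |A_{n,r} \bmod nQ_i|^k.
\]

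Summing this bound against $\mu_i$, interchanging the sums in $r$ and $n$, and using the definition of the $k$th moment $M_k^k(i,n)$, the right hand side becomes $W^{k-1}\sum_n w_n M_k^k(i,n) \, \mu_i(R_{i-1}^* \cap R_i)$. The key step is now to invoke Lemma \ref{moment_lemma}, which gives $M_k^k(i,n) \leq \beta_k^k(i)$ uniformly in $n \in \cN_{i+1}$. This bound is uniform in $n$, so it pulls out of the sum, producing $W^k \beta_k^k(i) \, \mu_i(R_{i-1}^* \cap R_i)$.

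Assembling the estimates and dividing through by $\mu_i(R_{i-1}^* \cap R_i)$ yields the desired inequality. There is no real obstacle: the content is a straightforward Markov/Jensen computation, the convexity playing the role of the ``convexity-type control'' mentioned just before the lemma, and the work has already been done in proving Lemma \ref{moment_lemma}.
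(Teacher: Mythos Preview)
Your proposal is correct and follows essentially the same route as the paper: both arguments normalize the weights to a probability measure, apply the convexity of $x\mapsto x^k$ (Jensen) to pass from the $k$th power of the mixture to the mixture of $k$th powers, invoke Lemma~\ref{moment_lemma} to bound each $M_k^k(i,n)$ by $\beta_k^k(i)$, and finish with Markov's inequality. The only cosmetic difference is that you apply Markov before Jensen rather than after, which is immaterial.
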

\begin{proof}
 Set $w_n' = \frac{w_n}{\sum_{\tilde{n}} w_{\tilde{n}}}$, which is a probability
measure on
$\cN_{i+1}$. Convexity gives
\[
 \left(\sum_{n \in
\cN_{i+1}} w_n' |A_{n,r}
\bmod nQ_i| \right)^k \leq \sum_{n \in
\cN_{i+1}} w_n' |A_{n,r}
\bmod nQ_i|^k, 
\]
so that 
\[
 \frac{1}{\mu_i(R_{i-1}^* \cap R_i)} \sum_{r \in R_{i-1}^* \cap R_i }\mu_i(r)
\left(\sum_{n
\in
\cN_{i+1}} w_n' |A_{n,r}
\bmod nQ_i| \right)^k \leq \sum_{n \in
\cN_{i+1}} w_n' M_k^k(i,n) \leq \beta_k^k(i).
\]
The result now follows from Markov's inequality.
\end{proof}

We now complete  our argument by using the bias statistics to
guarantee the existence of good fibres. 

For a given $p \in (P_{i}, P_{i+1}]$, the  dilation condition of good fibres
(\ref{lambda_good}) at $p$ is
the
statement that 
\[
 \sum_{n \in \cN_{i+1}, p|n} \frac{|A_{n,r} \bmod n Q_{i}| e^{\lambda
\omega(n)}}{n} \leq 1-e^{-\lambda}.
\]
By applying the convexity lemma, Lemma \ref{convexity_lemma}, with weights
\[
 w_n = \one_{p|n} \frac{e^{\lambda \omega(n)}}{n},
\]
we find that the fraction of fibres failing this condition is bounded by
\[
 \min_k \frac{\beta_k^k(i)}{(1- e^{-\lambda})^k} \left(\sum_{n \in \cN_{i+1}:
p|n} \frac{e^{\lambda \omega(n)}}{n}\right)^k.
\]
Since 
\[
 \sum_{n \in \cN_{i+1}, p|n} \frac{e^{\lambda \omega(n)}}{n} \leq
\frac{e^{\lambda}}{p-1} \sum_{n \in \{1\}\cup \cN_{i+1}} \frac{e^{\lambda
\omega(n)}}{n} \leq \frac{e^{\lambda}}{p-1} \prod_{P_{i}<p' \leq P_{i+1}}\left(1
+
\frac{e^{\lambda}}{p'-1}\right),
\]
making a union bound, we find that the total fraction of fibres failing
some dilation condition is bounded by 
\[
 \min_k \beta_k^k(i)\frac{e^{k \lambda}}{(1-e^{-\lambda})^k
}\left(\prod_{P_{i}<p \leq P_{i+1}}\left(1 +
\frac{e^{\lambda}}{p-1}\right)
\right)^k \sum_{P_{i}< p \leq P_{i+1}} \frac{1}{(p-1)^k}.
\]
For a value $0 < \pi^{\good}<1$, we make the constraint that this quantity is
bounded by $1-\pi^{\good}$, that
is
\begin{equation*}\tag{C1}
\frac{e^{\lambda}}{1-e^{-\lambda}} \prod_{P_{i}<p \leq P_{i+1}}\left(1 +
\frac{e^{\lambda}}{p-1}\right) \leq  
\max_k
\frac{(1-\pi^{\good})^{\frac{1}{k}}}{\beta_k(i)} \left(\sum_{P_{i} < p
\leq
P_{i+1}} \frac{1}{(p-1)^k} \right)^{-\frac{1}{k}},
\end{equation*}
which guarantees that, with respect to $\mu_i$, the fraction of good fibres
in $R_{i-1}^* \cap R_i$ is at least $\pi^{\good}$.

\subsection{Proof of Theorem \ref{main_result}}
The iterative stage of our argument is summarized in the following technical
theorem.

\begin{main_theorem}\label{technical_theorem}
Let $i \geq 0$ and let $0<\pi^{\good} <1$.  Let the set $R_{i-1}^*$ exist such
that $R_{i-1}^* \cap
R_{i}$ is non-empty, with associated measure $\mu_i$ and bias statistics
$\beta_k(i)$, $k = 1,
2, 3, ...$. Suppose that $\lambda>0$ and $P_{i+1} > P_{i}$ satisfy the 
constraint
\begin{equation*}\tag{C1}
 \prod_{P_{i}<p \leq P_{i+1}}\left(1 +
\frac{e^{\lambda}}{p-1}\right) \leq
\frac{1-e^{-\lambda}}{e^{\lambda}}\max_k
\frac{(1-\pi^{\good})^{\frac{1}{k}}}{\beta_k(i)} \left(\sum_{P_{i} < p
\leq
P_{i+1}} \frac{1}{(p-1)^k} \right)^{-\frac{1}{k}}.
\end{equation*}
Then there exists $R_i^* \subset R_{i-1}^* \cap R_i$
defined modulo $Q_{i}$ with $\frac{\mu_i(R_i^*)}{\mu_i(R_{i-1}^* \cap R_i)}
\geq \pi^{\good}$. The density of $R_{i+1}$ in each fibre above $R_{i}^*$
is positive, and the associated bias statistics $\beta_k(i+1)$ of $R_i^* \cap
R_{i+1}$ with respect to $\mu_{i+1}$ satisfy
\[
 \beta_k^k(i+1) \leq \frac{\beta_k^k(i)}{\pi^{\good}} \prod_{P_{i} < p \leq
P_{i+1}}\left(1 +
e^{\lambda} \sum_{j=1}^{v_p}
\frac{(j+1)^k - j^k}{p^j}\right), \qquad k = 1, 2, ....
\]
\end{main_theorem}

We now make specific choices for our parameters and prove Theorem
\ref{main_result}. 

\begin{proof}[Proof of Theorem \ref{main_result}]
Set $M =  10^{16}$ as in Theorem \ref{main_result}. For $i \geq 0$,
let $P_i =
e^{11 + i}$. Set $e^\lambda = 2$, $\pi^{\good} =
\frac{1}{2}$. It will suffice to check that the density of the set $R_0$ is
positive, and that the constraint (C1) of Theorem \ref{technical_theorem} is met
for every $i \geq 0$.

By Rankin's trick, for any $\sigma > 0$,
\[
 \sum_{\substack{m >M\\ p|m \Rightarrow p \leq P_0}}
\frac{1}{m} \leq M^{-\sigma} \sum_{m : p|m \Rightarrow p \leq P_0}
\frac{1}{m^{1-\sigma}} = M^{-\sigma} \prod_{p \leq P_0} \left(1 -
\frac{1}{p^{1-\sigma}}\right)^{-1}.
\]
Choosing $\sigma = 0.19$, we verify in Pari-GP \cite{PariGP} that the right hand
side is less than
$0.859$, so that $R_0$ is non-empty, and, in particular, $\delta = 0.86$ in
the
initial stage is permissible.

We will argue throughout with the 3rd bias statistic.  We calculate
\begin{align*}
\beta_3(0) & \leq \left((1-\delta)^{-1} \prod_{p \leq P_0}
\left(\sum_{j=0}^\infty \frac{3j^2 + 3j + 1}{p^j}\right)\right)^{\frac{1}{3}} <
731.8.
\end{align*}

We use the following explicit estimates, which are verified in Appendix A.  
For all $n\geq 11$,
\begin{align*}
 &\prod_{e^{n}<p \leq e^{n+1}} \left(1 + \frac{2}{p-1}\right) < 1.2.
\\& \prod_{e^{n}<p \leq e^{n+1}}\left(1 + 2 \sum_{j=1}^\infty \frac{(j+1)^3 -
j^3}{p^j}\right)  < 3.4.
\\& \left(\sum_{e^{n}<p \leq e^{n+1}} \frac{1}{(p-1)^3}\right)^{-\frac{1}{3}} 
> (2n e^{2n})^{\frac{1}{3}}.
\end{align*}
Thus the constraint (C1) is satisfied at $i = 0$, since
\[
 \prod_{e^{11}<p \leq e^{12}} \left(1 + \frac{2}{p-1}\right) < 1.2
 < \frac{(1-0.5)^{\frac{1}{3}}}{4} \frac{1}{731.8}  \left(\sum_{e^{11} < p
\leq e^{12}} \frac{1}{(p-1)^3}\right)^{\frac{-1}{3}}.
\]
The constraint holds for all $i$, since the growth of the bias
statistics guarantees that for $i \geq 0$,
\[
 \frac{\beta_3(i+1) }{\beta_3(i)}< \left(\frac{
3.4}{0.5}\right)^{\frac{1}{3}} < e^{\frac{2}{3}},
\]
which is less than the  growth of $\left((22 + 2i)e^{22 +
2i}\right)^{\frac{1}{3}}$ from $i$ to $i+1$.
\end{proof}

\newpage
\appendix

\section{Explicit estimates with primes}\label{prime_sum_appendix}

A standard reference for explicit prime sum estimates is \cite{RS75}.  Slightly
stronger estimates are now known, (see e.g. \cite{FK13}) but the following will
suffice for our purpose.

\begin{theorem}[\cite{RS75} Corollary 2]
 Let $\theta(x) = \sum_{p \leq x} \log p$.  For $x\geq 678407$ we have
 \begin{equation}\label{theta_error}
  |\theta(x) - x| < \frac{x}{40 \log x}.
 \end{equation}
\end{theorem}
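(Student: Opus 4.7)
The plan is to establish the bound via the explicit formula linking the Chebyshev function $\psi(x)=\sum_{p^k\le x}\log p$ to the non-trivial zeros of the Riemann zeta function, and then to transfer the resulting inequality to $\theta(x)$. Since the statement is quoted as Corollary 2 of \cite{RS75}, the proof I would give follows Rosser and Schoenfeld's programme.

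First I would reduce to a bound on $\psi$ using the identity
\[
\psi(x) \;=\; \theta(x) + \theta(x^{1/2}) + \theta(x^{1/3}) + \cdots,
\]
which terminates at $k = \lfloor \log_2 x\rfloor$. Elementary Chebyshev-type inequalities bound $\theta(x^{1/k})$ by $O(x^{1/k})$ for each $k\ge 2$, so that $|\psi(x)-\theta(x)|\le 2\sqrt{x}$ comfortably for $x$ in the relevant range. The problem is thereby reduced to proving an analogue of the form $|\psi(x)-x|<c\,x/\log x$ with a slightly sharper constant.

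Next I would invoke the truncated explicit formula
\[
\psi(x) - x \;=\; -\sum_{|\Im\rho|\le T}\frac{x^{\rho}}{\rho} + E(x,T),
\]
where $\rho$ ranges over the non-trivial zeros of $\zeta$ and $E(x,T)$ admits an explicit bound of order $x\log^2(xT)/T$. The sum over zeros is split into two ranges. For $|\Im\rho|\le T$, I would use the computational verification that all such zeros lie on the critical line, so each term contributes at most $\sqrt{x}/|\rho|$, and the number of such zeros up to height $U\le T$ is controlled by the Riemann–von Mangoldt formula $N(U)=\frac{U}{2\pi}\log\frac{U}{2\pi e}+O(\log U)$. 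For $|\Im\rho|>T$, I would use an explicit classical zero-free region $\Re\rho<1-c/\log|\Im\rho|$ (of de la Vallée Poussin type) to show the tail is negligible.

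The main obstacle is the explicit bookkeeping: one must balance the height $T$ up to which RH has been numerically verified against the constant $c$ in the zero-free region, so that after including the error $E(x,T)$ and the $\psi$-to-$\theta$ correction, the final constant is $\tfrac{1}{40}$. For $x$ below some intermediate crossover, the analytic bound is weaker than needed, and so one additionally verifies the inequality directly by computing $\theta(x)$ for $x$ up to that crossover; the threshold $x\ge 678407$ in the statement reflects where this direct verification ends. Recalibrating these constants from the ones achieved in \cite{RS75} is the only substantive step; everything else is standard explicit analytic number theory.
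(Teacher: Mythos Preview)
The paper does not prove this statement at all: it is quoted verbatim as Corollary~2 of Rosser--Schoenfeld \cite{RS75} and then used as a black box in the proof of Lemma~\ref{explicit_lemma}. So there is no ``paper's own proof'' against which to compare your attempt.

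That said, your sketch is a faithful outline of exactly how Rosser and Schoenfeld establish such bounds in \cite{RS75}: the explicit formula for $\psi(x)$, a split of the zero sum using numerical verification of RH up to a height together with an explicit de la Vall\'ee Poussin zero-free region beyond, the transfer from $\psi$ to $\theta$ via the trivial bound on $\psi(x)-\theta(x)$, and direct computation below a crossover. Nothing in your outline is wrong; the only caveat is that this is a sketch of someone else's paper rather than of the present one, and the genuinely laborious part---the explicit constants in the zero-free region and the error terms of the truncated explicit formula---is precisely what \cite{RS75} supplies and what you have (reasonably) waved at rather than reproduced.
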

We now check the explicit estimates used in the proof of Theorem
\ref{main_result}.
\begin{lemma}\label{explicit_lemma}
For any $n \geq 11$ 
 \begin{align*}
  &\prod_{e^n < p \leq e^{n+1}} \left(1 + \frac{2}{p-1}\right)< 1.2 \\
  &\prod_{e^n < p \leq e^{n+1}} \left(1 + 2 \sum_{j=1}^\infty
\frac{(j+1)^3 - j^3}{p^j}\right) < 3.4\\
  &\sum_{e^n  < p \leq e^{n+1}} \frac{1}{(p-1)^3} <  \frac{1}{2n e^{2n}}.
 \end{align*}
 \end{lemma}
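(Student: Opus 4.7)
The plan is to prove each of the three bounds by combining partial summation against $\theta(x)$ with the Rosser--Schoenfeld estimate (\ref{theta_error}), supplemented by direct Pari/GP computation for the small values of $n$ where that estimate does not apply throughout $[e^n, e^{n+1}]$. Since (\ref{theta_error}) requires $x \geq 678407 \approx e^{13.43}$, the primes in $(e^n, e^{n+1}]$ lie partly below that threshold for $n \in \{11,12,13\}$; for those three cases I would simply enumerate the primes (at most primes up to $e^{14} \approx 1.2 \times 10^6$) in Pari/GP and verify the inequalities numerically. The body of the analytic argument then only needs to handle $n \geq 14$.

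For $n \geq 14$ and a $C^1$ function $f$ on $[e^n, e^{n+1}]$, apply Abel summation with $g(t) = f(t)/\log t$:
$$\sum_{e^n < p \leq e^{n+1}} f(p) \;=\; \int_{e^n}^{e^{n+1}} \frac{f(t)}{\log t}\, dt \;+\; \Bigl[g(t)\, E(t)\Bigr]_{e^n}^{e^{n+1}} \;-\; \int_{e^n}^{e^{n+1}} g'(t)\, E(t)\, dt,$$
where $E(t) = \theta(t) - t$ satisfies $|E(t)| < t/(40\log t)$. The boundary values and the error integral both contribute $O(1/\log t)$ relative to the main term, so each prime sum reduces to an explicit one-dimensional integral times $1 + O(1/n)$.

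For (1), take $f(p) = 1/(p-1)$. The main term is essentially $\log((n+1)/n)$, so by $\log(1 + 2/(p-1)) \leq 2/(p-1)$ the log of the product is close to $2\log(1+1/n)$, which for $n \geq 14$ is at most $2\log(15/14) \approx 0.138 < \log 1.2 \approx 0.182$. For (2), expand $2\sum_{j \geq 1}(3j^2+3j+1)/p^j = 14/p + O(1/p^2)$; the leading contribution is therefore $14\log((n+1)/n)$, which for $n \geq 14$ is at most $14\log(15/14) \approx 0.966 < \log 3.4 \approx 1.224$. For (3), with $f(p) = 1/(p-1)^3$, the main integral is bounded by $(1/n)\int_{e^n}^{e^{n+1}} dt/(t-1)^3 \leq (1 - e^{-2})(1 + O(e^{-n}))/(2n e^{2n}) < 0.44/(n e^{2n})$, comfortably below the required $1/(2n e^{2n})$.

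The main obstacle is the striking tightness of inequality (2) at $n = 11$: the heuristic $14\log(12/11) \approx 1.218$ differs from $\log 3.4 \approx 1.224$ by under half a percent, leaving essentially no asymptotic slack. This forces all three inequalities to be verified by genuine direct Pari/GP computation for $n \in \{11,12,13\}$ rather than by a single uniform analytic estimate. Fortunately the monotone decay of $\log((n+1)/n)$ means that for $n \geq 14$ each bound holds with substantial margin, and the slightly fiddly partial-summation error tracking is needed only to confirm that the $1 + O(1/n)$ correction in the analytic range does not erode that margin.
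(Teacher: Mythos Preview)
Your approach is correct and essentially the same as the paper's: direct Pari/GP verification for the small cases and partial summation against the Rosser--Schoenfeld bound for the rest. The only cosmetic difference is the split point---the paper handles $n \in \{11,12\}$ numerically and $n \geq 13$ analytically (plugging in $n=13$ throughout the partial-summation error terms), whereas you push the numerical range to $n \leq 13$; your choice is arguably cleaner since, as you note, the Rosser--Schoenfeld hypothesis $x \geq 678407$ is not satisfied at the lower endpoint $e^{13} \approx 4.4 \times 10^5$.
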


\begin{proof}
Using Pari-GP \cite{PariGP} we verified these estimates numerically for $n= 11,
12, 13$. For $n > 13$ they
follow by partial summation against (\ref{theta_error}).  For the first,
\[
 \log \prod_{e^n < p \leq e^{n+1}} \left(1 + \frac{2}{p-1}\right) \leq 2
\sum_{e^n< p \leq e^{n+1}}\frac{1}{p-1} \leq
\frac{2}{1-e^{-n}}\int_{e^n}^{e^{n+1}}
\frac{d\theta(x)}{x\log x}.
\]
Write $d\theta(x) = dx + d(\theta(x) - x)$.  Integrating the second term by
parts, we obtain  
\begin{align*}
\int_{e^n}^{e^{n+1}}
\frac{d\theta(x)}{x\log x} &\leq \log \frac{n+1}{n} +
\frac{|\theta(e^{n+1})-e^{n+1}|}{(n+1)e^{n+1}} + \frac{|\theta(e^n) -
e^n|}{ne^n}\\&\qquad\qquad\qquad + 
\int_{e^n}^{e^{n+1}} \frac{|\theta(x)-x|}{x^2} \left(\frac{1}{\log x} +
\frac{1}{(\log x)^2}\right) dx\\
& \leq  \log \frac{15}{14} +
\frac{1}{40 \cdot 15^2} + \frac{1}{40 \cdot 14^2 } + \frac{2}{40 \cdot 14} \log
\frac{15}{14} < 0.0695
\end{align*}
so that
\[
 \frac{2}{1-e^{-14}} \int_{e^n}^{e^{n+1}} \frac{d\theta(x)}{x\log x}  <0.14<
\log 1.2.
\]

For the second,
\begin{align*}
 \log \prod_{e^n < p \leq e^{n+1}} \left(1 + 2 \sum_{j=1}^\infty
\frac{(j+1)^3 - j^3}{p^j}\right) &\leq 2 \sum_{e^n < p \leq e^{n+1}}
\sum_{j=1}^\infty
\frac{(j+1)^3 - j^3}{p^j} \\& \leq 14 \sum_{e^n < p \leq e^{n+1}}
\frac{1}{p-3}\\& \leq \frac{14}{1-3 e^{-14}} \sum_{e^n < p \leq e^{n+1}}
\frac{1}{p} \\ & < \frac{14}{1-3 e^{-14}} \cdot 0.07 < 1 <
\log(3.4).
\end{align*}

For the third, proceed as for the first,
\begin{align*}
 \sum_{e^n < p \leq e^{n+1}} &\frac{1}{(p-1)^3} \leq \frac{1}{n(1-e^{-n})^3}
\left( \int_{e^n}^{e^{n+1}} \frac{dx}{x^3} + \int_{e^n}^{e^{n+1}}
\frac{d(\theta(x) - x)}{x^3 }\right)
\\ & \leq \frac{1}{(1-e^{-n})^3} \left[\frac{1-e^{-2}}{2n e^{2n}} +
\frac{1}{40 n^2 e^{2n}} + \frac{1}{40 n(n+1) e^{2(n+1)}} + \frac{3}{40 n^2}
\int_{e^n}^{e^{n+1}} \frac{dx}{x^3 }\right]
\\& \leq \frac{1}{2ne^{2n}} \frac{1}{(1-e^{-14})^3} \left[1 - e^{-2} +
\frac{1}{20 \cdot 14} + \frac{1}{20 e^2 \cdot 15} + \frac{3}{40 \cdot
14}\right]\\
& < \frac{0.88}{2n e^{2n}}.
\end{align*}

\end{proof}

\section{The relative Lov\'{a}sz Local Lemma}\label{LLL_appendix}

For completeness, and for the reader's convenience, we record a proof of the
relative form of the Lov\'{a}sz Local Lemma used in our argument.  We emphasize
that the proof is the standard one, see for instance \cite{ASE92} pp.
54--55, although the conclusion that we need is not typically recorded.  

Recall the statement of the lemma.

\begin{lemma*}[Lov\'{a}sz Local Lemma, relative form]
 Let $\{A_u\}_{u \in V}$ be a finite collection of events in a probability
space. Let $D = (V, E)$ be
a directed graph, such that, for each $u \in V$, event $A_u$ is independent of
the sigma-algebra generated by the
events $\{A_v: (u,v) \not \in E\}$.  Suppose that there exist real numbers
$\{x_u\}_{u \in V},$ satisfying $0 \leq x_u < 1$, and for each $u \in V$, 
 \[
 \Prob(A_u) \leq x_u \prod_{(u,v) \in E} (1 - x_v).
 \]
 Then for any $\emptyset \neq U\subset V$
 \begin{equation}\label{appendix_local_lemma_strong}
  \Prob\left( \bigcap_{u\in V}A_u^c \right) \geq \Prob\left(
\bigcap_{u\in U} A_u^c\right) \cdot \prod_{v \in V\setminus U}(1 - x_v).
 \end{equation}
 In particular, taking $U$ to be a singleton,
 \begin{equation}\label{appendix_local_lemma_weak}
  \Prob\left( \bigcap_{u\in V} A_u^c\right) \geq \prod_{u \in V}
(1-x_u).
 \end{equation}

\end{lemma*}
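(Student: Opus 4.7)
The plan is to establish the standard technical lemma by induction on $|S|$: for every $u \in V$ and every $S \subseteq V \setminus \{u\}$ (with $\Prob(\bigcap_{v \in S} A_v^c) > 0$),
$$\Prob\Bigl(A_u \,\Bigm|\, \bigcap_{v \in S} A_v^c\Bigr) \leq x_u.$$
The relative form (\ref{appendix_local_lemma_strong}) then drops out of a chain-rule expansion: enumerate $V \setminus U = \{v_1, \ldots, v_m\}$ in any order and write
$$\Prob\Bigl(\bigcap_{v \in V} A_v^c\Bigr) = \Prob\Bigl(\bigcap_{u \in U} A_u^c\Bigr) \prod_{j=1}^m \Prob\Bigl(A_{v_j}^c \,\Bigm|\, \bigcap_{u \in U \cup \{v_1,\ldots,v_{j-1}\}} A_u^c\Bigr).$$
The technical lemma bounds each factor in the product below by $1 - x_{v_j}$, which is exactly what is needed. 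The weak form (\ref{appendix_local_lemma_weak}) is the specialization $|U|=1$ combined with the trivial estimate $\Prob(A_{u_0}^c) \geq 1 - x_{u_0}$ on the remaining probability.

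For the technical lemma, the base case $|S| = 0$ is immediate from the hypothesis $\Prob(A_u) \leq x_u \prod_{(u,v)\in E}(1-x_v) \leq x_u$. For the inductive step, partition $S = S_1 \sqcup S_2$ where $S_1 = \{v \in S : (u,v) \in E\}$. If $S_1 = \emptyset$, then independence of $A_u$ from the sigma-algebra generated by $\{A_v : (u,v) \notin E\}$ reduces the conditional probability to $\Prob(A_u) \leq x_u$. Otherwise write the conditional probability as the quotient
$$\frac{\Prob\bigl(A_u \cap \bigcap_{v \in S_1} A_v^c \bigm| \bigcap_{v \in S_2} A_v^c\bigr)}{\Prob\bigl(\bigcap_{v \in S_1} A_v^c \bigm| \bigcap_{v \in S_2} A_v^c\bigr)}.$$
The numerator is at most $\Prob(A_u \mid \bigcap_{v \in S_2} A_v^c) = \Prob(A_u) \leq x_u \prod_{(u,v) \in E}(1 - x_v)$ by dropping the $A_v^c$ factors and invoking independence. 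The denominator is at least $\prod_{v \in S_1}(1 - x_v)$: expand it using the chain rule and apply the inductive hypothesis to each factor, since every conditioning set that arises has size strictly less than $|S|$. The leftover $(1-x_v)$ factors indexed by out-neighbors of $u$ not in $S_1$ are each at most $1$, yielding the claimed bound $x_u$.

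The proof is essentially the standard one; the only point worth emphasizing is that the relative conclusion (\ref{appendix_local_lemma_strong}) is obtained by simply stopping the chain-rule expansion after the initial segment indexed by $U$, rather than carrying it through to the empty conditioning set as in the traditional derivation of (\ref{appendix_local_lemma_weak}). The only real bookkeeping obstacle is verifying that every conditioning event encountered has positive probability so that the conditional probabilities are well defined, but this is handled in parallel with the main induction: each such probability factors via the chain rule into terms of the form $1 - \Prob(A_v \mid \cdots) \geq 1 - x_v > 0$, so the induction carries itself along.
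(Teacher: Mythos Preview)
Your proof is correct and follows essentially the same route as the paper's: both establish by induction on $|S|$ the key conditional-probability bound $\Prob(A_u \mid \bigcap_{v\in S} A_v^c) \le x_u$ via the same neighbor/non-neighbor split and quotient estimate, and then read off the relative conclusion by a chain-rule expansion that stops at $U$. The only cosmetic difference is that the paper packages the induction as two simultaneous statements (the conditional bound and the product lower bound on $\Prob(\bigcap_{j\in S} A_j^c)$), whereas you fold the positivity bookkeeping into a side remark; the underlying argument is identical.
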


\begin{proof}
By assigning an ordering to $V$, identify it with the set $\{1,2, ...,n\}$ for
some $n$.  Assume that in this ordering $U$ is identified with $\{1, 2, ...,
m\}$ for some $m$.

The following is to be shown by induction.  For $k = 1, 2, ..., n$,
\begin{enumerate}
 \item For any $S \subset \{1, ..., n\}$, $|S| = k-1$, and for any $1 \leq i
\leq n$, $i \not \in S$ we have
 \[ \Prob\left(A_i \; \bigg| \; \bigcap_{j \in S} A_j^c\right) \leq x_i\]
 \item For any $S \subset \{1, ..., n\}$, $|S| = k$ we have
 \[ \Prob\left(\bigcap_{j \in S} A_j^c\right) \geq \prod_{j \in S} (1 - x_j).\]
\end{enumerate}
Obviously (\ref{appendix_local_lemma_weak}) is the second item when $k =
n$.  The conclusion (\ref{appendix_local_lemma_strong}) is also easily deduced: 
\[
 \Prob\left(\bigcap_{i=1}^n A_i^c\right) = \Prob\left(\bigcap_{i =
1}^m A_i^c\right)\cdot  \prod_{j = m+1}^n \Prob\left(A_{j}^c
\;\bigg |\; \bigcap_{i = 1}^{j-1} A_i^c\right) \geq
\Prob\left(\bigcap_{i = 1}^m A_i^c\right) \cdot \prod_{j = m+1}^n (1
- x_j).
\]

When $k = 1$, the conditional statement is to be interpreted as if there is no
conditioning, and both statements are then obvious.  

To induce, let $1 < k \leq n$ and
assume the truth of both statements for any $1 \leq k' < k$.  We first prove
statement 1 in case $k$.  Note that by the case $k-1$ of statement 2, the
conditional probability in 1 is well defined. Let $S_1 = \{j \in S: (i,j) \in
E\}$ and let $S_2 = S \setminus S_1$. We may obviously assume that $S_1= \{j_1
<
j_2 < ... < j_r\}$ is non-empty, since otherwise the result is immediate by
independence. We have
\[
 \Prob\left(A_i \; \bigg| \; \bigcap_{j \in S} A_j^c\right) = \frac{
\Prob\left(A_i
\cap \bigcap_{j \in S_1} A_j^c \; \bigg| \; \bigcap_{j \in S_2}
A_j^c\right)}{\Prob\left(\bigcap_{j \in S_1} A_j^c \; \bigg| \; \bigcap_{j \in
S_2}
A_j^c\right)}.
\]
For the denominator we have the lower bound
\begin{align*}
 \Prob\left(A_{j_1}^c \; \bigg| \; \bigcap_{j \in S_2} A_j^c\right)\cdot
 \Prob\left(A_{j_2}^c \; \bigg| \; A_{j_1}^c \cap \bigcap_{j \in S_2} A_j^c
\right)\cdot ... \cdot \Prob\left(A_{j_r}^c \; \bigg| \; \bigcap_{\ell
=1}^{r-1}
A_{j_\ell}^c \cap \bigcap_{j \in S_2} A_j^c\right)&\\ \geq \prod_{\ell = 1}^r (1
-
x_{j_\ell}),&
\end{align*}
by applying 1 of the inductive assumption in cases $k' < k$.

For the numerator we have the upper bound
\[
 \Prob\left(A_i \cap \bigcap_{j \in S_1} A_{j}^c \; \bigg| \; \bigcap_{j 
 \in S_2} A_j^c\right) \leq \Prob\left(A_i \; \bigg| \; \bigcap_{j \in S_2}
A_j^c
\right) = \Prob(A_i) \leq x_i \prod_{j: (i,j) \in E} (1-x_j).
\]
Combined, these two bounds prove 1 in case $k$.

To prove 2 in case $k$, let $S = \{j_1< j_2< ...< j_r\}$ and observe
\[
 \Prob\left(\bigcap_{j \in S} A_j^c\right) = \prod_{\ell = 1}^r
\Prob\left(A_\ell^c \; \bigg| \; \bigcap_{1 \leq m < \ell} A_m^c\right) \geq
\prod_{\ell = 1}^r (1 - x_\ell),
\]
which uses 1 in case $k$.
\end{proof}

\end{document}